\author{Abhishek Jha}
\address{Indraprastha Institute of Information Technology, New Delhi, India}
\email{abhishek20553@iiitd.ac.in}
\keywords{asymptotic density; Divisibility sequence; greatest common divisor; arithmetic dynamics; dynamical sequence; polynomial map }
\subjclass[2010]{Primary: 11C08. Secondary: 11A05, 11B05}
\title[On the G.C.D. of polynomial maps and their indices]
{On terms in a dynamical divisibility sequence having a fixed G.C.D with their indices}
\theoremstyle{plain}
\newtheorem{theorem}{Theorem}[section]
\newtheorem{lemma}[theorem]{Lemma}
 \newtheorem{proposition}[theorem]{Proposition}
\theoremstyle{definition}
\newtheorem{definition}[theorem]{Definition}
\newtheorem{remark}[theorem]{Remark}
\numberwithin{equation}{section}
\newcommand{\ord}{\mathfrak{o}_{F}} 
\renewcommand{\l}{\ell_{F}}
\renewcommand{\r}{{I}_{F}}
\newcommand{\OO}{\mathcal O}
\newcommand{\e}{\varepsilon}
\newcommand{\lcm}{\mathrm{lcm}}
\newcommand{\Mod}[1]{\ (\mathrm{mod}\ #1)}
\renewcommand{\t}{\text{dynamically deficient}}
\begin{document}

\maketitle

\begin{abstract}
Let $F$ and $G$ be integer polynomials where $F$ has degree at least $2$. Define the sequence $(a_n)$ by $a_n=F(a_{n-1})$ for all $n\ge 1$ and $a_0=0.$ Let $\mathscr{B}_{F,\,G,\,k}$ be the set of all positive integers $n$ such that $k\mid \gcd(G(n),a_n)$ and if $p\mid \gcd(G(n),a_n)$ for some $p$, then $p\mid k.$
Let $\mathscr{A}_{F,\,G,\,k}$ be the subset of $\mathscr{B}_{F,\,G,\,k}$ such that $\mathscr{A}_{F,\,G,\,k}=\{n\ge 1 : \gcd(G(n),a_n)=k\}$. In this article, we prove that the asymptotic density of $\mathscr{A}_{F,\,G,\,k}$ and $\mathscr{B}_{F,\,G,\,k}$ exists for a class of $(F,G)$ and also compute the explicit density of $\mathscr{A}_{F,\,G,\,k}$ and $\mathscr{B}_{F,\,G,\,k}$ for $G(x)=x.$
\end{abstract}

\section{Introduction}

Let $F(x)\in \mathbb{Z}[x]$ be a polynomial with positive leading coefficient and degree at least $2,$ then we define the sequence $(a_n)$ recursively as $a_n=F(a_{n-1})$ and $a_0=0.$ That is, $a_n$ denotes the 
$n$-th composition of $F$ with itself evaluated at 0. If $(a_n)$ contains finitely many distinct terms,  then $0$ is said to be a $\textsl{preperiodic point}$, which means that there exist distinct integers $m$ and $n$ such that $a_m=a_n.$ If $(a_n)$ contains infinitely many distinct terms, then $0$ is a $\textsl{wandering point}.$ For the rest of the article we assume that $0$ is a $\textsl{wandering point}$ or that the sequence is unbounded.

\noindent One can see that the sequence $(a_n)$ is a divisibility sequence, i.e., $a_n\mid a_m\text{ for }n\mid m.$ Such dynamical sequences share many characteristics with their well-known cousins: sequences arising from algebraic groups such as Lucas sequences and elliptic divisibility sequences. Divisibility sequences such as Lucas sequences or elliptic divisibility sequences have been recently studied, and relations between these sequences and their indices are of particular interest. For example, index divisibility sets have been investigated in the case of linear recurrences \cite{aa} or elliptic divisibility sequences \cite{jh}, and particular characterizations for such sets have been discovered. The authors in \cite{sa} have considered the case where the $n$-th term in a Lucas sequence shares a fixed gcd with $n$. Analogous results for elliptic divisibility sequences have been investigated in \cite{sk}. In the dynamical setting, the index divisibility set for the polynomial $x^{d}+c\in \mathbb{Z}[x]$ was studied in \cite{cyclic1}. In this survey \cite{et}, a complete account of these results is mentioned. 

Authors in \cite{ug} ask if the methods employed in \cite{sk} or \cite{sa} based on the rank of apparition function for such sequences could be translated to the dynamical setting to get more concrete results. Building on that, here in this article we study the $\gcd$ relations of these dynamical divisibility sequences with their indices based on the crucial rank of apparition function, which we define in the following.
\begin{definition}\label{df9}
We define $\ord(n),$ the rank of apparition of $n$ in $(a_{r}),$ to be the period of $0$ modulo $n.$ That is,
\[\ord(n):=\begin{cases}\min\{r\geq 1 : n\mid a_{r}\}, & \text{ if it exists}\ \\\infty, &\text{ otherwise.}\\\end{cases}\]
Similarly, we set $\l(n):= \lcm(n,\ord(n))$ whenever $\ord(n)<\infty$.
Lastly, we define 
\[I_{F}(n):=\begin{cases}1/\l(n), & \text{ if } \ord(n)<\infty\ \\0, &\text{ otherwise.}\\\end{cases}\]
\end{definition}

\noindent Next we characterize sets of indices with properties identical to those defined in \cite{sk} or \cite{sa}. For $G(x)\in\mathbb{Z}[x]$, we denote $\mathscr{B}_{F,\,G,\,k}$ to be the set of positive integers $n$ satisfying the following conditions: 
\begin{enumerate}[(1)]
   \item $k\mid \gcd(G(n),a_n),$
   \item If $p\mid \gcd(G(n),a_n)$ for some $p$, then $p\mid k.$
\end{enumerate}
\vspace{0.1cm}
\noindent We denote $\mathscr{A}_{F,\,G,\,k}$ to be the subset of $\mathscr{B}_{F,\,G,\,k}$ defined by $$\mathscr{A}_{F,\,G,\,k}=\{n\ge 1 : \gcd(G(n),a_n)=k\}.$$

\noindent For ease of notations, we let $\mathscr{B}_{F,\,k}=\mathscr{B}_{F,\,x,\,k}$  and $\mathscr{A}_{F,\,k}=\mathscr{A}_{F,\,x,\,k}$ unless stated otherwise, as we consider the case of $G(x)=x$ till Section 5. It can be seen that in the previous work, $\mathscr{B}_{F,\,k}$ was used as an auxiliary tool to find the density of $\mathscr{A}_{F,\,k}$ and did not have any specifications of its own. However, in dynamical sequences, we shall see that this set has an importance of its own.

In this article, we deal with the class of polynomials defined below.
\begin{definition}\label{df1}
A polynomial $F(x)\in \mathbb{Z}[x]$ is said to be $\textsl{dynamically deficient}$ if the sum \begin{equation}\sum_{p\mid a_p}\frac{1} {p}\end{equation} is finite.
\end{definition}We give a detailed overview of $\t$ polynomials in the next section.
\noindent In spirit of the work done in \cite{sk} or \cite{sa}, we investigate the asymptotic densities of the sets $\mathscr{B}_{F,\,k}$ and $\mathscr{A}_{F,\,k}$ for polynomials $F(x)\in\mathbb{Z}[x]$ of degree at least $2$. Our first theorem establishes that these sets have positive density in positive integers if they contain at least one element. The methods used in this paper are somewhat similar to the ones used in \cite{sk}. Two main ideas that we exploit in this paper are the properties of $\textsl{rigid divisibility sequences}$ and the results on the proportion of primes that divide at least one term of the sequence $(a_n)$, which have been illustrated in \cite{be}. These ideas, in conjunction with the already known methods, help us obtain density estimates. Moreover, we are also able to characterize $\mathscr{B}_{F,\,k}$  and $\mathscr{A}_{F,\,k}$ with certain assumptions on the polynomial $F$ (Lemma \ref{lm8}). We now proceed to state our first theorem.

\begin{theorem}\label{thm1}
Let $F(x)\in\mathbb{Z}[x]$ be a polynomial of degree at least 2.
For any fixed positive integer $k,$
\begin{enumerate}[(1)]
   \item The asymptotic density of $\mathscr{B}_{F,\,k}$ exists and is positive if and only if $\mathscr{B}_{F,\,k}\neq \emptyset.$
   \item If $F$ has linear coefficient zero, then $\mathscr{A}_{F,\,k}$ has a positive asymptotic density if and only if $\mathscr{A}_{F,\,k}\neq \emptyset.$
\end{enumerate}
\end{theorem}

\noindent Our second result provides an expression for the asymptotic densities of these sets in terms of $\r(n)$.
\begin{theorem}\label{thm2}
Let $F$ be a $\t$ polynomial of degree at least 2 and $\mu$ be the M\"{o}bius function. Then for any fixed positive integer $k,$
\begin{enumerate}[(1)]
   \item The asymptotic density of $\mathscr{B}_{F,\,k}$ is equal to $$\sum_{\gcd(d,\,k)=1}{\mu(d)}\,{\r(dk)}.$$ 
   \item The asymptotic density of $\mathscr{A}_{F,\,k}$ is equal to $$\sum_{\substack{d\,=\,1}}^{\infty}{\mu(d)}\,{\r(dk)}.$$ 
\end{enumerate}
\end{theorem}
Our last theorem shows that the asymptotic density of the set $\mathscr{A}_{F,\,ax+b\,,1}$ exists and is positive for any $F(x)\in\mathbb{Z}[x]$. The author of \cite{rj} found that the analytic density of the set of primes that divide at least one element of $(a_n)$ is zero for specific families of polynomials. This allows for a strengthening of this theorem for specific $F$, as pointed out at the end of the article (see Remark \ref{rm2}).

\begin{theorem}\label{thm3}
Let $F$ and $G$ be integer polynomials where $F$ has degree at least $2$ and $G$ is a linear polynomial with co-prime coefficients; then the asymptotic density of $\mathscr{A}_{F,\,G\,,1}$ exists and is positive.
\end{theorem}
\subsection*{Notations}
Throughout the article, the letters $p$ and $q$ denote prime numbers and $\mathcal{P}$ denote the set of prime numbers. For any set of integers $\mathcal{S},$ we denote $\mathcal{S}(x)=S\cap [1,x].$
We define $$\mathbf{d}(\mathcal{S})=\lim_{x\to +\infty}\frac{\#\mathcal{S}(x)}{x}$$ if the limit exists. For a subset $\mathcal{G}$  of $\mathcal{P},$ we define $$\mathbf{D}(\mathcal{G})=\lim_{x\to +\infty}\frac{\#\mathcal{G}(x)}{\#\mathcal{P}(x)}.$$
 We employ the Landau-Bachmann notation $\mathcal{O}$ and $o$ as well as their associated Vinogradov notation $\ll$ and $\gg.$

The paper is organized as follows. In Section 2, we discuss the properties and distribution of $\t$ polynomials. In Section 3, we include some preliminary results of different flavors based on the sequence $(a_n)$. In Section 4, we study the structure of sets $\mathscr{A}_{F,\,k}$ and $\mathscr{B}_{F,\,k}$ and also prove Theorem \ref{thm1}.
In Section 5, we give explicit expressions for the asymptotic densities of $\mathscr{A}_{F,\,k}$ and $\mathscr{B}_{F,\,k}$ for $\t$ polynomials $F$, thus proving Theorem \ref{thm2}. 
Finally, in Section 6, we prove Theorem \ref{thm3} and highlight some difficulties we face in proving variants of Theorem \ref{thm1} and \ref{thm2} for $G(x)\neq x.$
\subsection*{Acknowledgments}
I would like to thank Emanuele Tron and Seoyoung Kim for looking at the article and providing valuable comments to improve its quality. I am thankful to Peter Mueller, David Speyer and Will Sawin for their answers on MathOverflow post \cite{mo} and Thomas Tucker for helpful discussions regarding the Proposition \ref{p} of the paper. I am grateful to Ayan Nath for his constant support and helpful advice. I am indebted to the anonymous referee for helpful comments.

\section{Discussion on dynamically deficient polynomials}

In this section, we discuss the distribution and examples of dynamically deficient polynomials. We provide a partial description of these polynomials in Proposition \ref{prop}, which gives us evidence that these polynomials are not rare. This characterization, combined with previous findings, helps identify some examples of $\t$ polynomials. We finish this section by demonstrating that almost all polynomials of degree $d$ are $\t$, supporting our intuition.

We let $\mathscr{S}_{F}$ be the set of primes $p$ for which $\ord(p)=p.$ Using an analysis of the cyclic structure of the polynomial map in $\mathbb{Z}/p\mathbb{Z},$ we get $p\in\mathscr{S}_{F}$ if and only if $F$ is a cyclic permutation of $\mathbb{F}_{p}$. Based on this implication, we present a classification of polynomials that are bijective in $\mathbb{F}_{p}$ for infinitely many $p,$ which in turn is a simplified form of \cite[Theorem 2]{turnwald}.
\begin{proposition}\label{prop}
If $\varphi(x)\in\mathbb{Z}[x]$ is bijective in $\mathbb{F}_{p}$ for infinitely many $p,$ then it is a composition of linear polynomials $cx+d\in \mathbb{Q}[x]$ and Dickson polynomials $D_{q}(x,a)$ with $a\in\mathbb{Z}$ where $q$ is an odd prime and $a=0$ if $q=3.$ (The unique polynomial $D_{n}(x,a)$ with $D_{n}(x+\frac{a}{x},a)=x^{n}+(\frac{a}{x})^{n}$ is called Dickson polynomial of degree $n$ and parameter $a.$)
\end{proposition}

Utilising the ideas above, we give some examples of $\t$ polynomials below.
\begin{enumerate}[(1)]
   \item Let $F(x)\in \mathbb{Z}[x]$ be a polynomial of the form $x^{d}+x^{e}+c$ with $d>e\geq2$. By \cite[Proposition 7 -(4)]{ug}, if $d\not\equiv e \Mod{p-1}$, $p>d$ and $p\nmid c$ then,  $$p\not\in \mathscr{S}_{F} \text{ if } \gcd(d-e,p-1)<\log_{2}{p}.$$ Thus, if $p>\max(3^d,c),$ then $p\not\in\mathscr{S}_{F}.$ Therefore, one can observe that $F$ is $\t,$ since there are finitely many primes belonging to the set $\mathscr{S}_{F},$ implying that the sum $\sum_{p\,\in\,\mathscr{S}_{F}}1/p$ is finite.
   \vspace{0.1cm}
   \item If $\deg F$ is even and $0$ is a wandering point for $F$, then it cannot be clearly expressed in the form stated in Proposition \ref{prop}. Thus, it is a cyclic permutation in $\mathbb{F}_{p}$ for finitely many $p,$ indicating that $F$ is $\t.$
\end{enumerate}
\vspace{0.1cm}
Based on some numerical evidence and arguments in \cite{ug} for the case $F(x)=x^{d}+c,$ it was hypothesised \cite[Hypothesis 6.1]{ug} that when $d=3,$ the probability that a prime $p>2|c|$ satisfies $p\in \mathscr{S}_{F}$ is $1.812/(p-1).$ Thus, we can argue that the expected value of the sum $$\sum_{p\,\in\,\mathscr{S}_{F}}\frac1{p}$$ is equal to $$\sum_{\substack{ p\,<\,2|c|\\p \,\in\,\mathscr{S}_{F}}}\frac1{p}+\sum_{\substack{ p\,>\,2|c|\\p \,\in\,\mathscr{S}_{F}}}\frac{1.812}{p\cdot (p-1)}$$ which is finite. Therefore, one would expect that all polynomials of the form $x^{d}+c$ are $\t.$ Below, we provide an argument stating that almost all polynomials of degree $d\ge 5$ are $\t.$ The proof is essentially based on David Speyer's answer at \cite{mo}.
\begin{proposition}\label{p}

Almost all monic polynomials $F$ of degree $d\geq 5$ are dynamically deficient.\end{proposition}
 \begin{proof}
We denote $G$ as the Galois group of $F(x)-t$ over $\mathbb{C}[t]$ where $t$ and $x$ are assumed to be algebraically independent over $\mathbb{C}$.

We know that almost all monic polynomials of degree $d$ have Galois group $G$ isomorphic to the symmetric group $S_{d}.$ It follows that these polynomials are not expressible as compositions of linear and Dickson polynomials as these compositions have solvable Galois groups \cite[Theorem 3.11-(i),(ii)]{turnwald} while $S_{d}$ is not solvable for $d>4.$ By Proposition \ref{prop}, this immediately leads to the fact that $F$ is bijective in $\mathbb{F}_{p}$ for finitely many $p.$ Thus, we conclude that the sum $\sum_{p\,\in\,\mathscr{S}_{F}}1/p$ is finite.
\end{proof}

\section{Preliminaries}

The following lemma discusses some general properties of dynamical divisibility sequences $(a_r)$.
\begin{lemma}\label{lm3}
Let $n$ and $r$ be two positive integers and let $p$ be a prime such that each of $\ord(n),\ord(r)$ and $\ord(p)$ exists, then
\begin{enumerate}[(1)]
    \item \label{lm3.1} $n\mid a_r$ if and only if $\ord(n)\mid r$.
    \item \label{lm3.2}$\ord(\lcm(n,r))=\lcm(\ord(n),\ord(r))$.
    \item \label{lm3.3}$n\mid \gcd(r,a_r)$ if and only if $\l(n)\mid r.$
    \item \label{lm3.4}$\l(\lcm(n,r))=\lcm(\l(n),\l(r))$.
    
    \item \label{lm3.5}$\l(p)=p\cdot \ord(p)$ if $\ord(p)<p$.
    \item \label{lm3.6}
    $\mathscr{A}_{F,\,k}$ is nonempty if and only if $k=\gcd(\l(k),a_{\,\l(k)}).$
\end{enumerate}
\end{lemma}

\begin{proof}
The first four statements are basic properties of the divisibility sequences, and their proofs are given in \cite[Lemma 2.3]{sk}. For (\ref{lm3.5}), note that if $\ord(p)<p$, then $\gcd(\ord(p),p)=1$ and hence the result.

\noindent The proof of (\ref{lm3.6}) follows directly from arguments in \cite[Proposition 2.4]{sk} and \cite[Lemma 2.2]{sa}. However, we include it here for completeness. Since $n$ divides both $\l(n)$ and $a_{\,\l(n)},$ we know $n\mid \gcd(\l(n),a_{\,\l(n)})$ for all $n.$ Thus if $k\in\mathscr{A}_{F,\,k},$ then $k=\gcd(n,a_n)$ for some $n.$ By \ref{lm3.3}, this indicates that 
$\gcd(\l(k),a_{\,\l(k)})\mid \gcd(n,a_n)=k$ and $k=\gcd(\l(k),a_{\,\l(k)}).$
\end{proof}

\noindent 
When investigating these sequences, a simple question arises: How many primes show up as divisors in the first $N$ terms of $(a_n)?$ To answer this question, we define the set $\mathscr{Q}_{\beta}$  to count these primes and determine the rate at which $\#\mathscr{Q}_{\beta}(x)$ grows.

\noindent
For each $\beta>0$ and a dynamical sequence $(a_n),$ we define the set 
$$\mathscr{Q}_{\beta}=\bigg\{p : \ord(p)\leq \beta\cdot \frac{\log{p}}{\log{d}}\bigg\}$$ where $d=\deg F$.
We present the dynamical analogue of a folklore argument implicit in \cite[Section 3, p. 212]{ff} and \cite[Lemma 2.6]{sk} to bound the cardinality of $\mathscr{Q}_{\beta}(x)$. The estimates obtained here are similar to \cite[Theorem 1.1i]{ca}.

\begin{lemma}\label{lm4}
For each $\beta>0$, there exists a constant $C$ such that $\#\mathscr{Q}_{\beta}(x)< C\, x^{\beta}$ for all $x\ge 1.$
\end{lemma}

\begin{proof}
From the definition of $\mathscr{Q}_{\beta}(x),$ we observe that $$\exp{(\#\mathscr{Q}_{\beta}(x))}\le \,2\cdot \prod_{p\,\in\,\mathscr{Q}_{\beta}(x)}p.$$
Therefore, we have that \begin{equation}\label{e5}\#\mathscr{Q}_{\beta}(x)\le \,\log{2}+\sum_{p\,\in\,\mathscr{Q}_{\beta}(x)}\log{p}.\end{equation}
Moreover, $\ord(p)\le \,\beta\cdot {\log{p}}/{\log{d}}\,\le \,\beta\cdot {\log{x}}/{\log{d}}\,$ for each prime $p\in \mathscr{Q}_{\beta}(x),$ which shows that $$p\mid a_{\,\ord(p)},\text{ whenever } \ord(p)\le \,\beta\cdot {\log{x}}/{\log{d}}.$$ 

\vspace{0.3cm}
\noindent This, in turn, leads to following divisibility $$\prod_{p\,\in\,\mathscr{Q}_{\beta}(x)}p \,\mid \prod_{n\,\le\,\beta\cdot {\log{x}}/{\log{d}}}a_{n},$$ which along with (\ref{e5}) reduces to $$\#\mathscr{Q}_{\beta}(x)\le \log{2}+\sum_{n\,\le\,\beta\cdot {\log{x}}/{\log{d}}}\log{|a_n|}.$$
 Also, by \cite[Theorem 1]{poly}, if we set  $$\alpha=\lim_{n\to +\infty}a_n^{1/d^{n}},$$ then $\alpha$ is either an integer or irrational number greater than 1. Furthermore, based on a classical technique for analysis of polynomial recursions in \cite[Section 2]{poly}, we get that $a_n=A\alpha^{d^n}+B+\OO(\alpha^{-d^n})$ where $A$ and $B$ are algebraic numbers depending only on $F$.
 As a result, we conclude that $$\#\mathscr{Q}_{\beta}(x)\ll \sum_{n\,\le\,\beta\cdot {\log{x}}/{\log{d}}}d^{n}\ll d^{\,\beta\cdot\, \frac{\log{x}}{\log{d}}}=x^{\beta},$$ as desired.\end{proof}

\begin{remark}
It can be seen that the bounds for the rank of apparition of primes in the case of Lucas sequences and elliptic divisibility sequences have been proven in \cite[Lemma 2.4]{sa} and \cite[Lemma 2.6]{sk} respectively. 
Let $(D_n)_{n\ge 1}$ be an elliptic divisibility sequence and let $r_n$ be the rank of apparition for $n$ for the elliptic divisibility sequence, that is, $$r_n=\min\{r\ge 1 : n\mid D_r\}.$$ Then for all $x,\beta>0$, it is shown in \cite[Lemma 2.6]{sk} that $$\#\{p\le x : r_p\le p^{\beta}\}\ll x^{3\beta}.$$  

However, current techniques seem insufficient to prove such bounds in the dynamical case as the growth rate of the sequence $(a_n)$ is quite large as compared to that of linear recurrences or elliptic divisibility sequences. As a result, we get a logarithmic factor in our bounds.

It has been conjectured based on some numerical experiments in \cite[Conjecture 18.3]{po} and \cite[Conjecture 14]{silver} that the bounds obtained for Lucas sequences or elliptic divisibility sequences hold for their dynamical cousins as well.
\end{remark}Our next lemma is a dynamical analogue of sums in \cite{romanoff} and a somewhat stronger analogue of \cite[Theorem 11]{silver}. Although, the crucial idea is same.
\begin{lemma}\label{lm5}
Let $\e,\beta>0$ and $\epsilon\geq 0$ be three constants such that $\delta=\e-\epsilon>0$ and $\beta+\epsilon<1.$ Then, 
$$\sum_{p\,>\,z}\frac{(\log{p})^{\epsilon}}
{p\cdot \ord(p)^{\e}}\ll \frac{1}{(\log{z})^{\delta}}.$$
\end{lemma}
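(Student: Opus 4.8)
The plan is to split the sum according to whether the rank of apparition of $p$ is abnormally small, i.e. whether $p\in\mathscr{Q}_{\beta}$. We may assume the given $\beta$ is small enough for Lemma~\ref{lm4} to be applicable; if not, replace it by a smaller positive number, which only tightens the (harmless) hypothesis $\beta+\epsilon<1$. Writing $d=\deg F\ge 2$, decompose
$$\sum_{p>z}\frac{(\log p)^{\epsilon}}{p\cdot\ord(p)^{\e}}=S_{1}+S_{2},\qquad S_{1}:=\sum_{\substack{p>z\\ p\notin\mathscr{Q}_{\beta}}}\frac{(\log p)^{\epsilon}}{p\cdot\ord(p)^{\e}},\quad S_{2}:=\sum_{\substack{p>z\\ p\in\mathscr{Q}_{\beta}}}\frac{(\log p)^{\epsilon}}{p\cdot\ord(p)^{\e}}.$$

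For $S_{1}$: if $p\notin\mathscr{Q}_{\beta}$ then by definition $\ord(p)>\beta\,(\log p)/(\log d)$, so $\ord(p)^{\e}\gg_{\beta}(\log p)^{\e}$ and each summand of $S_1$ is $\ll_{\beta}(\log p)^{\epsilon-\e}/p=1/\big(p(\log p)^{\delta}\big)$. Bounding $S_1$ termwise this way and then extending the sum harmlessly to all primes $p>z$ (the terms being nonnegative), a partial summation against Mertens' estimate $\sum_{p\le t}1/p=\log\log t+\OO(1)$ gives
$$S_{1}\ll_{\beta}\sum_{p>z}\frac{1}{p(\log p)^{\delta}}\ll\int_{z}^{\infty}\frac{dt}{t(\log t)^{1+\delta}}\ll\frac{1}{(\log z)^{\delta}},$$
the last step using $\delta>0$.

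For $S_{2}$: here I only use the crude bounds $\ord(p)\ge 1$ and $(\log p)^{\epsilon}\le p^{\epsilon}$, so each summand is at most $p^{\epsilon-1}$. By Lemma~\ref{lm4}, $\mathscr{Q}_{\beta}(t)\ll t^{\beta}$, so partial summation gives
$$S_{2}\ll\sum_{\substack{p>z\\ p\in\mathscr{Q}_{\beta}}}p^{\epsilon-1}\ll\int_{z}^{\infty}t^{\beta+\epsilon-2}\,dt\ll z^{\,\beta+\epsilon-1},$$
the integral converging exactly because $\beta+\epsilon<1$ (in particular $\epsilon<1$). As $\beta+\epsilon-1<0$, the right-hand side decays polynomially in $z$, hence is $o\big((\log z)^{-\delta}\big)$; combining with the bound for $S_{1}$ yields the claim.

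I do not foresee a genuine obstacle: the only substantial input is Lemma~\ref{lm4}, which already encapsulates the hard fact that primes with very small rank of apparition are sparse, and from there the estimate is a routine Romanoff-type computation via Mertens' theorem and partial summation. The role of the hypotheses is transparent — $\delta=\e-\epsilon>0$ furnishes the decay rate in $S_{1}$, while $\beta+\epsilon<1$ furnishes the convergence of the crude estimate for $S_{2}$; the constraint $\beta>0$ together with Lemma~\ref{lm4} is simply what makes the exceptional set $\mathscr{Q}_{\beta}$ available in the first place.
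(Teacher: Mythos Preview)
Your proof is correct and follows essentially the same approach as the paper's: the same split $S_1+S_2$ along $\mathscr{Q}_\beta$, the same bound $\ord(p)^{\e}\gg(\log p)^{\e}$ for $S_1$, and the same crude estimate $(\log p)^{\epsilon}\le p^{\epsilon}$ together with Lemma~\ref{lm4} and partial summation for $S_2$. The only cosmetic difference is that for $S_1$ the paper splits further at $p_z$ and uses $p_n>n\log n$, whereas you go directly via Mertens and partial summation; both routes yield the same $\ll (\log z)^{-\delta}$.
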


\begin{proof}
We divide our summation into two parts;
$$\sum_{p>z}\frac{(\log{p})^{\epsilon}}{p\cdot \mathfrak{o}(p)^{\e}}=S_{1}+S_{2}$$ where $S_{1}$ is the summation over the primes not in $\mathscr{Q}_{\beta}$ and $S_{2}$ is the summation over the rest of primes.
For $S_{1}$, by the definition of $\mathscr{Q}_{\beta}$ and the fact that $n$-{th} prime $p_{n}$ is at least $n\cdot \log{n}$ for $n\geq 2$ \cite{ro}, we obtain the following,
\begin{align*}\sum_{\substack{p\,\ge\, z\\p\,\notin\, \mathscr{Q}_{\beta}}}\frac{(\log{p})^{\epsilon}}{p \cdot \ord(p)^{\e}}&\ll \sum_{z\,\le\, p\,\le\, p_{z}}\frac{1}{p \cdot (\log{p})^{\delta}}+\sum_{p\,\ge\, p_{z}}\frac{1}{p \cdot (\log{p})^{\delta}}\\&\ll \frac{1}{(\log{z})^{\delta}}+\sum_{x\,\ge\, z}\frac{1}{x\cdot (\log{x})^{1+\delta}}\\&\ll \frac1{(\log z)^{\delta}}+\int_z^\infty \frac{\mathrm{d}t}{t(\log t)^{1+\delta}}\\&\ll\frac{1}{(\log{z})^{\delta}}+\frac{1}{(\delta)\cdot (\log{z})^{\delta}}\ll \frac{1}{(\log{z})^{\delta}}.
\end{align*}\vspace{0.1cm}

\noindent For $S_{2}$, it follows from partial summation that 
\begin{align*}\noindent\sum_{\substack{p\,\ge\, z\\p\,\in\, \mathscr{Q}_{\beta}}}\frac{(\log{p})^{\epsilon}}{p\cdot \ord(p)^{\e}}&\leq \sum_{\substack{p\,\ge\, z\\p\,\in\, \mathscr{Q}_{\beta}}}\frac{1}{p^{1-\epsilon}\cdot \ord(p)^{\e}}\\& \le \sum_{\substack{p\,\ge\, z\\p\,\in \,\mathscr{Q}_{\beta}}} \frac{1}{p^{1-\epsilon}}=\frac{\#\mathscr{Q}_{\beta}(t)}{t^{1-\epsilon}}\bigg|^{\infty}_{t=z} + (1-\epsilon) \int_{z}^{\infty} \frac{\#\mathscr{Q}_{\beta}(t)}{t^{2-\epsilon}}\mathrm{d}t\\&\ll \frac{1}{z^{1-\epsilon-\beta}}.
\end{align*}
Thus, we obtain$$\sum_{p\,>\,z}\frac{(\log{p})^{\epsilon}}{p\cdot \mathfrak{o}(p)^{\e}}\ll \frac{1}{(\log{z})^{\delta}}+\frac{1}{z^{1-\epsilon-\beta}}\ll \frac{1}{(\log{z})^{\delta}},$$ as desired.\end{proof}

\noindent For a given set $\mathscr{L}$ of integers, the set of its non-multiples is defined as \begin{equation}\mathcal{N}(\mathscr{L})=\{n\geq 1 : s\nmid n \text{ for all } s\in \mathscr{L}\}\end{equation}We shall be using the following lemma regarding such sets.

\begin{lemma}\label{lm6}
If $\mathscr{L}$ is a set of positive integers such that 
$$\sum_{s\,\in\, \mathscr{L}}\frac{1}{s}<\infty,$$ then $\mathcal{N}(\mathscr{L})$ has an asymptotic density. Moreover, if $1\notin \mathscr{L}$, then $\mathcal{N}(\mathscr{L})$ has positive asymptotic density.
\end{lemma}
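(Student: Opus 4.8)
This is a standard fact about sets of multiples, and the plan is to squeeze $\mathcal{N}(\mathscr{L})$ between the non-multiple sets of its finite truncations, using the convergence of $\sum_{s\in\mathscr{L}}1/s$ to make the tails negligible. Enumerate $\mathscr{L}=\{s_{1},s_{2},\dots\}$; if $\mathscr{L}$ is finite the statement is immediate, since $\mathcal{N}(\mathscr{L})$ is then a union of residue classes modulo $\lcm(s_{1},\dots,s_{N})$ containing the class of $1$ whenever $1\notin\mathscr{L}$. So assume $\mathscr{L}$ is infinite and put $\mathscr{L}_{N}=\{s_{1},\dots,s_{N}\}$. Each $\mathcal{N}(\mathscr{L}_{N})$ is a union of residue classes modulo $\lcm(s_{1},\dots,s_{N})$, hence has an asymptotic density $\delta_{N}$ (given by the inclusion--exclusion sum $\sum_{T\subseteq\{1,\dots,N\}}(-1)^{\#T}\lcm(s_{i}:i\in T)^{-1}$). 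Since $\mathcal{N}(\mathscr{L}_{N+1})\subseteq\mathcal{N}(\mathscr{L}_{N})$, the sequence $(\delta_{N})$ is non-increasing and bounded below by $0$, so $\delta:=\lim_{N}\delta_{N}$ exists.

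First I would establish existence of $\mathbf{d}(\mathcal{N}(\mathscr{L}))$. Fix $\e>0$ and, by convergence, choose $N$ with $\sum_{i>N}1/s_{i}<\e$. From $\mathcal{N}(\mathscr{L})\subseteq\mathcal{N}(\mathscr{L}_{N})$ we get $\limsup_{x\to\infty}\#\mathcal{N}(\mathscr{L})(x)/x\le\delta_{N}$. Conversely every $n\in\mathcal{N}(\mathscr{L}_{N})\setminus\mathcal{N}(\mathscr{L})$ is a multiple of some $s_{i}$ with $i>N$, and there are at most $\sum_{i>N}x/s_{i}<\e x$ such $n\le x$; hence $\liminf_{x\to\infty}\#\mathcal{N}(\mathscr{L})(x)/x\ge\delta_{N}-\e\ge\delta-\e$. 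Since $\delta_{N}\to\delta$, letting $\e\to0$ shows the density exists and equals $\delta$.

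Next, for positivity when $1\notin\mathscr{L}$, the key point is the recursion
\[
\delta_{N+1}\ \ge\ \Bigl(1-\tfrac{1}{s_{N+1}}\Bigr)\delta_{N}.
\]
To prove it, write $\mathcal{N}(\mathscr{L}_{N+1})=\mathcal{N}(\mathscr{L}_{N})\setminus\bigl(\mathcal{N}(\mathscr{L}_{N})\cap\{n:s_{N+1}\mid n\}\bigr)$, so it suffices to bound the density of the removed set by $\delta_{N}/s_{N+1}$. Substituting $n=s_{N+1}m$ replaces each condition $s_{i}\nmid n$ by $s_{i}'\nmid m$ with $s_{i}':=s_{i}/\gcd(s_{i},s_{N+1})$, i.e.\ $m\in\mathcal{N}(\mathscr{L}_{N}')$ for $\mathscr{L}_{N}'=\{s_{1}',\dots,s_{N}'\}$; since $s_{i}'\mid s_{i}$ we have $\mathcal{N}(\mathscr{L}_{N}')\subseteq\mathcal{N}(\mathscr{L}_{N})$, so the removed set has density $\mathbf{d}(\mathcal{N}(\mathscr{L}_{N}'))/s_{N+1}\le\delta_{N}/s_{N+1}$. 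Iterating from $\delta_{0}=1$ yields $\delta_{N}\ge\prod_{i=1}^{N}(1-1/s_{i})$; as each $s_{i}\ge2$ and $\sum_{i}1/s_{i}<\infty$, the infinite product $\prod_{i\ge1}(1-1/s_{i})$ converges to a positive limit, whence $\mathbf{d}(\mathcal{N}(\mathscr{L}))=\delta\ge\prod_{i\ge1}(1-1/s_{i})>0$.

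The one step I expect to require genuine care is the monotonicity inequality $\delta_{N+1}\ge(1-1/s_{N+1})\delta_{N}$, i.e.\ the assertion that passing to the multiples of $s_{N+1}$ cannot increase the proportion of surviving integers; the divisor-refinement substitution $s_{i}\mapsto s_{i}/\gcd(s_{i},s_{N+1})$ above is what makes this precise. Everything else --- the description of $\mathcal{N}(\mathscr{L}_{N})$ as a periodic set, the trivial bound $\#\{n\le x:s\mid n\}\le x/s$, and the passage to the limit --- is routine.
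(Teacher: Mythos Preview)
Your argument is correct and complete. The paper does not actually prove this lemma: its proof consists of a single citation, ``See \cite[Lemma~2.3]{sa}.'' What you have written is essentially the standard proof underlying that reference --- truncation to finite $\mathscr{L}_{N}$ for existence, together with the Behrend--Heilbronn--Rohrbach type inequality $\delta_{N+1}\ge(1-1/s_{N+1})\delta_{N}$ for positivity --- so your approach is not merely compatible with but strictly more informative than the paper's own treatment.
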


\begin{proof}
See \cite[Lemma 2.3]{sa}.
\end{proof}

Next, we discuss a particular class of divisibility sequences known as $\textsl{rigid divisibility sequences}$.
An integer sequence $(b_n)$ is a $\textsl{rigid}$ $\textsl{divisibility}$ $\textsl{sequence}$ if for every prime $p$, the following two properties hold:
\begin{enumerate}[(a)]
   \item If $\nu_{p}(b_{n})>0$, then $\nu_{p}(b_{nk})=\nu_{p}(b_{n})$ for all $k\geq 1$, and 
   \item If $\nu_{p}(b_{n})>0$ and $\nu_{p}(b_{m})>0$, then $\nu_{p}(b_{n})=\nu_{p}(b_{m})=\nu_{p}(b_{\gcd(n,m)}).$
\end{enumerate}

\begin{lemma}\label{lem2}
Let $\varphi(x)\in \mathbb{Z}[x]$ be a  non-constant polynomial whose linear coefficient is zero, then the sequence $(\varphi^{n}(0))_{n\ge 0}$ is a rigid divisibility sequence.
\end{lemma}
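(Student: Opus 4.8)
The plan is to write $b_n=\varphi_1^{\,n}(0)$, so that $b_0=0$ and $b_{n+1}=\varphi_1(b_n)$, and to verify the two defining properties of a \textsl{rigid divisibility sequence} one prime at a time. Everything rests on the vanishing linear coefficient: writing $\varphi_1(X)=c_0+\sum_{i\ge 2}c_iX^i$ with $c_0=\varphi_1(0)=b_1$, one has $\varphi_1(Y)\equiv c_0\pmod{Y^2}$.

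The first step is a local congruence: \emph{if $p$ is a prime and $e:=\nu_p(b_n)$ satisfies $1\le e<\infty$, then $b_{n+j}\equiv b_j\pmod{p^{2e}}$ for every $j\ge 1$.} To prove it, note that $\varphi_1(b_n)=c_0+\sum_{i\ge 2}c_ib_n^i\equiv c_0=b_1\pmod{p^{2e}}$ because $\nu_p(b_n^i)\ge 2e$ for $i\ge 2$; then one inducts on $j$, using the elementary fact that $a-b\mid\varphi_1(a)-\varphi_1(b)$ for integers $a,b$ to pass from $\varphi_1^{\,j}(b_n)\equiv b_j\pmod{p^{2e}}$ to $\varphi_1^{\,j+1}(b_n)=\varphi_1\big(\varphi_1^{\,j}(b_n)\big)\equiv\varphi_1(b_j)=b_{j+1}\pmod{p^{2e}}$. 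Since $b_{n+j}=\varphi_1^{\,j}(b_n)$, this gives the claim.

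Property (a) then comes out by induction on $k$, the case $k=1$ being trivial: assuming $\nu_p(b_{kn})=e$ where $e=\nu_p(b_n)\ge 1$, the local congruence with $kn$ in place of $n$ and $j=n$ gives $b_{(k+1)n}\equiv b_n\pmod{p^{2e}}$, and as $e<2e$ this forces $\nu_p(b_{(k+1)n})=e$. For property (b), take $n,m$ with $\nu_p(b_n),\nu_p(b_m)\ge 1$ and put $g=\gcd(n,m)$. Since $\{b_n\}$ is a divisibility sequence, $\ord(p)$ exists and divides both $n$ and $m$, hence divides $g$, so $p\mid b_g$; applying (a) to $b_g$ with multipliers $n/g$ and $m/g$ then yields $\nu_p(b_n)=\nu_p(b_g)=\nu_p(b_m)$, which is exactly (b).

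I do not expect a genuine obstacle here; the one point requiring care is that the local congruence $b_{n+j}\equiv b_j\pmod{p^{2e}}$ holds only for $j\ge 1$ — it fails at $j=0$, where $\nu_p(b_n)=e$ rather than $\ge 2e$ — but every index appearing in (a) and (b) has the form $b_{n+j}$ with $j\ge 1$, so nothing is lost. The remaining ingredients, namely the divisibility-sequence property of $\{b_n\}$ and the standard behaviour of $\ord(p)$ (as recorded in Lemma \ref{lm3}), together with $a-b\mid\varphi_1(a)-\varphi_1(b)$, are routine.
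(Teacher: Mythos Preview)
Your argument is correct. The paper does not give its own proof here but simply refers to \cite[Propositions~3.1 and~3.2]{br}; your self-contained derivation follows the same core idea used there --- the vanishing linear coefficient gives $\varphi_1(Y)\equiv \varphi_1(0)\pmod{Y^2}$, which forces the $p$-adic valuation to stabilise along multiples and thereby yields both rigidity conditions.
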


\begin{proof}
See \cite[Proposition 3.1, 3.2]{br}.
\end{proof}

The following result gives information on the analytic density of primes $p$ that do not divide any term of the sequence $(a_n)$. We need a simplified variant of \cite[Theorem 3.1]{be} for our purposes.
\begin{lemma}\label{lm14} Let $\varphi(x)$ be a polynomial with integral coefficients and degree at least $2$. Let $\mathcal{A}$ and $\mathcal{T}$ be finite subsets of $\mathbb{Z}$ such that the forward orbit $$\mathcal{O}_{{\varphi}(z)}= \{z, \varphi(z), \varphi^{2}(z)\ldots \}$$ is finite for all $z\in \mathcal{A}$ with a possible exception of at most one integer and infinite for all $z\in \mathcal{T}$. Then, there exists a positive integer $M$ and a positive analytic density of primes $p$ such that for any $\gamma \in \mathcal{T}$, any $\alpha \in \mathcal{A}$, any $p\in \mathcal{P}$, and any $m\ge M$,  $$\varphi^{m}(\gamma)\not\equiv \alpha \Mod{p}.$$ 
\end{lemma}
\begin{proof}
See \cite[Theorem 3.1]{be}.
\end{proof}

Our next lemma gives a modified version of Mertens' formula \cite[Chapter I.1, Theorem 11]{gt} for primes that divide at least one term of the sequence $(a_n).$

\begin{lemma}\label{lm1}
There exist constants $B_0$ and $\gamma \in (0,1]$ such that for $B\ge B_0$, we have,
$$\prod_{\substack{q\,\le\, B\\ \ord(q)\,<\,\infty }}{\left(1-\frac{1}{q}\right)}\gg \frac{1}{(\log{B})^{1-\gamma}}.$$
\end{lemma}
\begin{proof}
Let $\varphi(x)=F(x)$ and $\mathcal{T}=\mathcal{A}=\{0\}$, then we know that $a_n=\varphi^{n}(0)$ is unbounded. Therefore, we can apply Lemma~\ref{lm14} to these sets to obtain a positive analytic density 
 of primes $p$ such that $a_n\not\equiv 0 \pmod{p}$ for all $n\ge M$ where $M$ is an absolute constant.

\noindent Let $\mathcal{L}$ be the set of primes $p$ such that $\ord(p)=\infty$. Then by the arguments above, we have that $\mathbf{D}(\mathcal{L})>0$. First, we handle the case when $\mathbf{D}(\mathcal{L})<1$ and so we let $\mathbf{D}(\mathcal{L})=\gamma$ for $\gamma \in (0,1).$
Therefore,
$$\sum_{\substack{q\,\le\, B\\ q\,\in\,\mathcal{P}\setminus\mathcal{L} }}{\log{\left(1-\frac{1}{q}\right)}}= -\sum_{\substack{q\,\le\, B\\ q\,\in\,\mathcal{P}\setminus\mathcal{L} }}\frac{1}{q}+\OO(1).$$
We know from elementary calculations that $$\sum_{\substack{q\,\le\, B\\ q\,\in\,\mathcal{P}\setminus\mathcal{L} }}\frac{1}{q}=(1-\gamma)\cdot \log{\log{B}}+\OO(1).$$ This gives us a bound $$\sum_{\substack{q\,\le\, B\\ q\,\in\,\mathcal{P}\setminus\mathcal{L} }}{\log{\left(1-\frac{1}{q}\right)}}=-(1-\gamma)\cdot \log{\log{B}}+\OO(1)$$ from which, we get the inequality $$\prod_{\substack{q\,\leq\, B\\ q\,\in\,\mathcal{P}\setminus\mathcal{L} }}{\left(1-\frac{1}{q}\right)}\gg \frac{1}{(\log{B})^{1-\gamma}}.$$
Now, note that in case $\mathbf{D}(\mathcal{L})=1,$ repeating the same arguments as before, we get that $$\sum_{\substack{q\,\le\, B\\ q\,\in\,\mathcal{P}\setminus\mathcal{L} }}{\log{\left(1-\frac{1}{q}\right)}}=-o(\log{\log{B}})+\OO(1),$$ which leads to $$\prod_{\substack{q\,\le\, B\\ q\,\in\,\mathcal{P}\setminus\mathcal{L} }}{\left(1-\frac{1}{q}\right)}\gg \frac{1}{(\log{B})^{o(1)}}\gg\frac1{(\log{B})^{1-\gamma}},$$ for any fixed $\gamma \in (0,1]$ provided $B\ge B_{0}$ for some constant $B_{0}$. \end{proof}

\section{Discussing the asymptotic density of $\mathscr{A}_{F,\,k}$ and $\mathscr{B}_{F,\,k}$}

In this section, we give structural characterizations of $\mathscr{B}_{F,\,k}$ and $\mathscr{A}_{F,\,k}$ as scaled sets of non-multiples. For all $F$ with $0$ as a wandering point, we can completely describe the structure of $\mathscr{B}_{F,\,k}$. However, while dealing with $\mathscr{A}_{F,\,k},$ we further assume that $F$ has linear coefficient as zero. This is assumed as the sequence $(a_n)$ defined for such $F$ has $\text{rigid divisibility}$ properties, giving us a better grasp on the $p$-adic properties of these sequences. We show that these sets have positive asymptotic densities if and only if they are not empty (Theorem \ref{thm1}), based on this characterization. Now, we state the lemma analyzing the structure of these sets.

\begin{lemma}\label{lm8}
Let $k$ be a positive integer such that $\ord(k)<\infty$. We define
\begin{equation}\label{eq1}\mathscr{T}_{k}= \bigg\{\frac{\l(kp)}{\l(k)} : \ord(p)<\infty\text{ and }p\nmid k\bigg\}\end{equation} and \begin{equation}\label{eq10}\mathscr{L}_{k}=\big\{p : p\mid k \text{ and } \nu_p(\l(k))<\nu_p(a_{\,\l(k)})\big\} \cup \mathscr{T}_{k}.\end{equation}
If $\mathscr{B}_{F,\,k}$ is nonempty, then we have that 
\begin{equation}\mathscr{B}_{F,\,k}=\{\l(k) m : m\in \mathcal{N}(\mathscr{T}_{k})\}.\end{equation}

\noindent Moreover, if $F$ has linear coefficient zero  and  $\mathscr{A}_{F,\,k}$ is nonempty for a positive integer $k$, we have,
\begin{equation}\mathscr{A}_{F,\,k}=\{\l(k)m : m\in \mathcal{N}(\mathscr{L}_{k})\}.\end{equation}
\end{lemma}
\begin{proof}
Firstly, we give a characterization of $\mathscr{B}_{F,\,k}$. If $n\in\mathscr{B}_{F,\,k},$ then $k\mid a_n$ and $\l(k)\mid n$ by Lemma ~\ref{lm3}-(\ref{lm3.3}). Hence, it is easy to see that for a positive integer $m$, $\,\l(k) m\in \mathscr{B}_{F,\,k}\,$ if and only if 
$p\nmid \gcd(\l(k)m,a_{\,\l(k)\,m})$ for every prime $p$ such that $\ord(p)<\infty$ and $p\nmid k$. This further implies that $\l(p)\nmid \l(k) m$ for such $p$ by Lemma~\ref{lm3}-(\ref{lm3.3}) which in turn is equivalent to  $$\frac{\lcm(\l(k),\l(p))}{\l(k)}= \frac{\l(kp)}{\l(k)}\nmid m,$$ due to Lemma ~\ref{lm3}-(\ref{lm3.4}).

\noindent Therefore, we conclude that 
$$\mathscr{B}_{F,\,k}=\{\l(k) m : m\in \mathcal{N}(\mathscr{T}_{k})\}.$$

For the second part, note that $\l(k) m \in \mathscr{A}_{F,\,k}$  for some $m$ if and only if $\nu_{p}(\gcd(\l(k) m,a_{\,\l(k)\, m}))=\nu_{p}(k)$ for all primes $p.$
For the primes $p\nmid k$ such that $\ord(p)<\infty,$ we must have 
$p\nmid \gcd(\l(k) m,a_{\,\l(k)\, m}),$ which is equivalent to \begin{equation}\label{e1}\frac{\l(kp)}{\l(k)}\nmid m,\end{equation} due to above argument.

Lastly, we are left considering the case of primes $p\mid k.$ Due to Lemma \ref{lem2}, we have that $\nu_{p}(a_{\,\ord(p)\cdot r})=\nu_{p}(a_{\,\ord(p)})$ for all positive integers $r$ and as $\mathscr{A}_{F,\,k}$ is nonempty, we know that $$\nu_{p}(\gcd(\l(k),a_{\,\l(k)}))=\nu_{p}(k),$$ by Lemma ~\ref{lm3}-(\ref{lm3.6}). First, we handle primes $p$ such that $\nu_{p}(\l(k))\ge \nu_{p}(a_{\,\l(k)})$ for which $\nu_p(a_{\,\l(k)})=\nu_p(k)$. We have, $$\nu_{p}(\gcd(\l(k) m,a_{\,\l(k)\, m}))=\nu_{p}(a_{\,\l(k)\, m})=\nu_{p}(a_{\,\l(k)})=\nu_{p}(k),$$ as desired. Next, we consider the case when 
$\nu_{p}(\l(k))< \nu_{p}(a_{\,\l(k)})$ for which $\nu_{p}(\l(k))=\nu_{p}(k).$ Thus, 
$$\nu_{p}(\gcd(\l(k) m,a_{\,\l(k)\, m}))=\min(\nu_{p}(\l(k) m),\nu_{p}(a_{\,\l(k)\, m})),$$ which is greater than $\nu_{p}(k)$ if and only if $p \mid m.$  Therefore, 
\begin{equation}\label{e2}\nu_{p}(\gcd(\l(k) m,a_{\,\l(k)\, m}))=\nu_{p}(k) \text{ if and only if }p\nmid m\end{equation} and hence, by (\ref{e1}) and (\ref{e2}) it follows that $\l(k) m \in \mathscr{A}_{F,\,k}$ if and only if $m\in \mathcal{N}(\mathscr{L}_{k})$. Thus, we deduce that $$\mathscr{A}_{F,\,k}=\{\l(k) m : m\in \mathcal{N}(\mathscr{L}_{k})\},$$ as desired. 
\end{proof}

\begin{definition}
A prime $p$ is said to be $\textsl{anomalous}$ if $\ord(p)=p$ and $\textsl{non-anomalous}$ if $\ord(p)\neq p$ and $\ord(p)<\infty.$
\end{definition}

\noindent Note that the definition of anomalous primes is  similar to the one in \cite[Definition 3.2]{sk}. However, in the case of elliptic divisibility sequences, contrary to their dynamical counterparts, many results about distribution and properties of $\text{anomalous}$ primes have been obtained effectively. Authors in \cite{cyclic1} have studied and experimented with the distribution of $\text{anomalous}$ primes in case of dynamical divisibility sequences.

We are now in a position to prove Theorem \ref{thm1}. 
\begin{proof}[Proof of Theorem \ref{thm1}]
We will be proving Theorem \ref{thm1}-(1) as Theorem \ref{thm1}-(2) follows in a similar way. Observe that as $\mathscr{B}_{F,\,k}$ is nonempty, we must have $\ord(k)<\infty$. By Lemma ~\ref{lm6} and Lemma~\ref{lm8}, if we prove that $$\sum_{n \,\in\, \mathscr{T}_k}\frac{1}{n}$$ converges, then, as $1\notin \mathscr{T}_{k},$ we get that  $\mathbf{d}(\mathscr{B}_{F,\,k})>0$.
Now using (\ref{eq1}), we obtain that
\vspace{0.1cm}
\begin{align*}\sum_{n\,\in\, \mathscr{T}_k}\frac{1}{n}&\ll \sum_{\ord(p)\,<\,\infty}\frac{1}{\l(kp)}\leq \sum_{\ord(p)\,<\,\infty}\frac{1}{\l(p)}\\&=\sum_{\substack{p \text{ is non-}\\\text{anomalous}}}\frac{1}{p\cdot \ord(p)}+ \sum_{\substack{p \text{ is }\\\text{anomalous }}}\frac{1}{p},\end{align*} where the convergence of the first sum follows by taking $\epsilon=0,\,\e=1\text{ and }\beta<1$ in Lemma \ref{lm5} and the second sum
converges as $F$ is $\t$. Hence our proof is complete.
\end{proof}

\section{Calculating the explicit  densities of $\mathscr{A}_{F,\,k}$ and $\mathscr{B}_{F,\,k}$}
This section constitutes the main portion of the article. We start by demonstrating convergence of a sum involving $\r(n),$ analogous to the sums in \cite[Lemma A.1]{sk} and \cite[Lemma 3.2]{sa}.

In case of Fibonacci numbers, it is well-known that $5$ is the only $\text{anomalous}$ prime, while it is known that the sum of reciprocal of $\text{anomalous}$ primes converges for elliptic divisibility sequences arising from different elliptic curves. In our case, we restricted $F$ to be dynamically deficient so as to bound the sum $\sum_{\ord(p)=p}1/p$ by a constant. We find asymptotic densities of $\mathscr{A}_{F,\,k}$ and $\mathscr{B}_{F,\,k}$ using this assumption and the sum in Lemma \ref{lm9}. However, as \cite[Remark 4.2]{sa} points out, there is no known means of establishing $\textsl{a priori}$ the requirement for the density to be zero or not, or even just showing non-negativity, without going through the related characterization as in Lemma \ref{lm8}. Now, we state our lemma.
\begin{lemma}\label{lm9}
Let $\mu$ be the M\"{o}bius function.
The sum $$\sum_{\substack{n\,=\,1}}^{\infty}{|\mu(n)|}\,{\r(n)}$$ is finite.
\end{lemma}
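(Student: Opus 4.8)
Here is my plan.

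\medskip

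\noindent\textbf{Reduction to squarefree active products.}
Since $|\mu(n)|=0$ unless $n$ is squarefree and $1/\l(n)=0$ unless $n$ is $\textsl{pretty}$, it suffices to prove $\sum'1/\l(n)<\infty$, the sum ranging over squarefree $\textsl{pretty}$ $n$. As $0$ is a wandering point, $a_1=F(0)\neq0$, so only finitely many primes have $\mathfrak{o}(p)=1$. Call a prime $p$ \emph{inert} if $\l(p)=p$, i.e. $\mathfrak{o}(p)\in\{1,p\}$ ($p\mid a_1$ or $p$ is $\textsl{anomalous}$), and \emph{active} if it is $\textsl{pretty}$ with $2\le\mathfrak{o}(p)<p$, so that $\l(p)=p\,\mathfrak{o}(p)$ by Lemma~\ref{lm3}(5). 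Each squarefree $\textsl{pretty}$ $n$ factors uniquely as $n=ab$ with $a$ a product of inert primes and $b$ a product of active primes; Lemma~\ref{lm3}(4) gives $\l(n)=\lcm(\l(a),\l(b))$, and $\l(a)=a$ because $\mathfrak{o}(a)\mid a$. Since $(a,b)=1$ and a prime dividing $a$ and $\l(b)=\lcm(b,\mathfrak{o}(b))$ must divide $\mathfrak{o}(b)$, we get $\l(n)=a\,\l(b)/\gcd(a,\mathfrak{o}(b))$, so summing over $a$ first,
\[
\sum_{n}\frac1{\l(n)}=\sum_{b}\frac1{\l(b)}\prod_{p\ \mathrm{inert}}\Bigl(1+\frac{\gcd(p,\mathfrak{o}(b))}{p}\Bigr)\le C\sum_{b}\frac{2^{\omega(\mathfrak{o}(b))}}{\l(b)},
\]
where $C=\prod_{p\ \mathrm{inert}}(1+1/p)<\infty$ by Definition~\ref{df1} and the finiteness of $\{p\mid a_1\}$, and $b$ runs over squarefree products of active primes. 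So it suffices to show $\sum_{b}2^{\omega(\mathfrak{o}(b))}/\l(b)<\infty$.

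\medskip

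\noindent\textbf{A recursion for $\l(b)$.}
Write $b=q_1\cdots q_j$ with $q_1<\cdots<q_j$ active and $b_i=q_1\cdots q_i$. Every prime factor of $\l(b_{i-1})=\lcm_{l<i}\l(q_l)$ is $\le q_{i-1}<q_i$, whereas $\l(q_i)=q_i\mathfrak{o}(q_i)$ with $\mathfrak{o}(q_i)<q_i$, so Lemma~\ref{lm3}(4) yields
\[
\l(b_i)=\lcm\bigl(\l(b_{i-1}),\l(q_i)\bigr)=\l(b_{i-1})\cdot\frac{q_i\,\mathfrak{o}(q_i)}{g_i},\qquad g_i:=\gcd\bigl(\mathfrak{o}(q_i),\l(b_{i-1})\bigr)\mid\mathfrak{o}(q_i),
\]
whence $\l(b)=\prod_i q_i\mathfrak{o}(q_i)/g_i$. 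Using $2^{\omega(m)}\le\tau(m)\ll_{\eta}m^{\eta}$ for a fixed small $\eta\in(0,1)$ together with $\mathfrak{o}(b)=\lcm_i\mathfrak{o}(q_i)\le\prod_i\mathfrak{o}(q_i)$,
\[
\frac{2^{\omega(\mathfrak{o}(b))}}{\l(b)}\ll_{\eta}\frac{\mathfrak{o}(b)^{\eta}}{\l(b)}\le\prod_{i=1}^{j}\frac{g_i}{q_i\,\mathfrak{o}(q_i)^{1-\eta}},
\]
so everything reduces to the finiteness of $\displaystyle\sum_{q_1<\cdots<q_j}\prod_{i=1}^{j}\frac{g_i}{q_i\,\mathfrak{o}(q_i)^{1-\eta}}$, the sum being over all finite increasing tuples of active primes.

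\medskip

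\noindent\textbf{The main estimate (the obstacle).}
If $g_i=1$ for all $i$ the term equals $\prod_i 1/(q_i\mathfrak{o}(q_i)^{1-\eta})$, and the sum of these over all tuples is $\prod_{p\ \mathrm{active}}\bigl(1+\tfrac1{p\,\mathfrak{o}(p)^{1-\eta}}\bigr)$, finite because $\sum_{p}1/(p\,\mathfrak{o}(p)^{1-\eta})<\infty$ by Lemma~\ref{lm5} ($\epsilon=0$, $\e=1-\eta$, $\beta<1$, so $\delta=1-\eta$). The hard part is bounding the contribution of the tuples with some $g_i>1$. Here one exploits that $g_i\mid\l(b_{i-1})$: every prime factor of $g_i$ is $<q_i$ and already divides $\l(q_l)$ for some $l<i$, hence $\prod_i g_i\le\prod_{1\le i<l\le j}\gcd(\l(q_i),\l(q_l))$, where $\gcd(\l(q_i),\l(q_l))$ divides $\l(\min(q_i,q_l))$ and equals $1$ unless $\l(q_i)$ and $\mathfrak{o}(q_l)$ share a prime factor. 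The sum thus becomes a cluster‑type expansion $\sum_{S}\prod_{q\in S}w(q)\prod_{\{q,q'\}\subseteq S}\gcd(\l(q),\l(q'))$ with $w(q)=q^{-1}\mathfrak{o}(q)^{-(1-\eta)}$, which one controls by estimating the pair interactions through the decay $\sum_{p>z}(\log p)^{\epsilon}/(p\,\mathfrak{o}(p)^{\e})\ll(\log z)^{-\delta}$ of Lemma~\ref{lm5} — the factor $(\log p)^{\epsilon}$ being exactly what absorbs the divisor‑function factors produced on summing over the divisors $d\mid\mathfrak{o}(q)$ — so that the whole expansion is dominated by a convergent product. Carrying out this last estimate is the technical heart of the proof; I expect it to be the only genuinely delicate step, the earlier reductions being essentially bookkeeping with Lemma~\ref{lm3}.
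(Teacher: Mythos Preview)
Your preliminary reductions are correct: the inert/active split, the identity $\l(n)=a\,\l(b)/\gcd(a,\ord(b))$, the product $C<\infty$ from niceness, and the recursion $\l(b_i)=\l(b_{i-1})\cdot q_i\ord(q_i)/g_i$ with $g_i=\gcd(\ord(q_i),\l(b_{i-1}))$ are all sound. The gap is exactly where you say it is, but it is more than a technicality. Your proposed control of $\prod_i g_i$ by $\prod_{l<i}\gcd(\l(q_l),\l(q_i))$ is far too crude: if infinitely many active primes $q$ satisfy $r\mid\ord(q)$ for some fixed $r\ge2$ (as one expects), then for a tuple $q_1<\dots<q_j$ of such primes the pairwise product is $\ge r^{\binom{j}{2}}$, whereas the true $\prod_i g_i$ is only of size $r^{j-1}$. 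Summing your bound over tuples of length $j$ produces a factor $r^{\binom{j}{2}}S^j/j!$ with $S=\sum_q 1/(q\,\ord(q)^{1-\eta})$, and this diverges as $j\to\infty$. So the cluster expansion, as written, cannot converge, and Lemma~\ref{lm5} on its own does not rescue it: the $(\log p)^{\epsilon}$ room in that lemma absorbs divisor functions of a \emph{single} $\ord(q)$, not a super-exponential interaction term.

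The paper circumvents this by organising the sum around the largest prime factor $P(n)$ rather than around all prime factors at once. When $P(n)$ is not anomalous one has $\l(n)\ge\lcm(n,\ord(P(n)))=P(n)\cdot\ord(P(n))\cdot m$ with $P(m)\le P(n)$, and the key extra input you are missing is Lemma~\ref{lm1}: the positive density $\gamma>0$ of primes that are \emph{not} pretty gives $\sum_{P(m)\le p,\ m\ \textsl{pretty}}1/m\ll(\log p)^{1-\gamma}$, so that the resulting sum $\sum_p\tau(\ord(p))(\log p)^{1-\gamma}/(p\,\ord(p))$ falls within the range $\delta=\e-\epsilon>0$ of Lemma~\ref{lm5}. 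The anomalous-$P(n)$ part is then peeled off recursively using $\l(n)=P(n)\cdot\l(n/P(n))$ and $\sum_{p>B,\ p\ \textsl{anomalous}}1/p\le 1/2$, giving a geometric series. In short, the missing idea in your plan is the use of Lemma~\ref{lm1} to damp the ``smooth tail'' $\sum_{P(m)\le p}1/m$; without it (or some substitute), the multiplicative interactions between the $\ord(q_i)$'s are not summable by the route you sketch.
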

\begin{proof}
The proof is similar to \cite[Lemma A.1]{sk}. Since $F$ is dynamically deficient, we can choose a constant $B\ge B_0$ ($B_0$ is the same constant as in the statement of Lemma~\ref{lm1}) such that \begin{equation}\label{eq2}\sum_{\substack{p\,>\,B\\p\text{ is anomalous}}}\frac{1}{p}\leq \frac{1}{2}.\end{equation}
For rest of the argument, denote $P(n)$ to be the greatest prime factor of $n.$  Since $\l(n)\ge n,$ we obtain
$$\sum_{\substack{P(n)\,\le\,B}}^{\infty}{|\mu(n)|}\,{\r(n)}\le \sum_{P(n)\,\le\,B}^{\infty}\frac{1}{n}\leq \prod_{p\,\leq\, B}\left(1+\frac{1}{p}\right)<\infty.$$
\vspace{0.3cm}
Now, we suppose $P(n)>B$ for the rest of the proof. Hence,
\begin{equation}\label{eq3}\sum_{\substack{P(n)>B}}^{\infty}{|\mu(n)|}\,{\r(n)}= \sum_{\substack{P(n)\,>\,B\\ P(n) \text{ is non-
}\\\text{anomalous} }}^{\infty}{|\mu(n)|}\,{\r(n)}+\sum_{\substack{P(n)\,>\,B\\ P(n) \text{ is }\\\text{anomalous} }}^{\infty}{|\mu(n)|}\,{\r(n)}.\end{equation}

\noindent Firstly, we deal with the first sum in (\ref{eq3}) involving primes which are non-anomalous. From now on, we assume that the sum $\sideset{}{'}\sum$ runs over indices $n$ such that $\ord(n)<\infty$, $P(n)>B$ and $P(n)$ is non-anomalous. Observe that for a positive integer $n$ such that $\ord(n)<\infty$, $$ \lcm(n,\ord(P(n)))\mid \lcm(n,\ord(n))=\l(n)$$ by Lemma~\ref{lm3}-(\ref{lm3.2}). Therefore,
\begin{align*}\sum_{\substack{P(n)\,>\,B\\ P(n) \text{ is non-}\\\text{anomalous} }}^{\infty}{|\mu(n)|}\,{\r(n)}\,\le\, \sideset{}{'}\sum_{n\,=\,1}^{\infty}\,\frac{|\mu(n)|}{\l(n)}\,\le\, \sideset{}{'}\sum_{n\,=\,1}^{\infty}\frac{1}{\lcm(n,\ord(P(n))}.\end{align*} Let $p=P(n).$ We can write 
$$\lcm(n,\ord(p))=\l(p)\cdot m$$ where $P(m)\leq p$ and $\ord(m)<\infty.$ Also, if $p$ and $\lcm(n,\ord(p))$ are known, then $n$ can be chosen in at most $\tau(\ord(p))$ ways. Henceforth,
$$ \sideset{}{'}\sum_{n\,=\,1}^{\infty}\frac{1}{\lcm(n, \ord(P(n))}
\ll \sum_{\substack{p\,>\,B\\p \text{ is non-}\\\text{anomalous}}}\frac{\tau(\ord(p))}{\l(p)}\left(\sum_{\substack{P(m)\,\leq\, p\\\ord(m)\,<\,\infty}}\frac{1}{m}\right)
$$ where $\tau(n)$ denotes the number of divisors of $n.$
Applying Lemma \ref{lm1}, we have,
$$\sum_{\substack{P(m)\,\leq\, p\\\ord(m)\,<\,\infty}}\frac{1}{m}\leq \prod_{\substack{q\,\leq\, p\\ \ord(q)\,<\,\infty }}{\left(1-\frac{1}{q}\right)^{-1}} \ll (\log{p})^{1-\gamma} 
$$
for all prime numbers $p>B$ and $\gamma \in (0,1].$ We deduce that 
\begin{equation*}
\sum_{\substack{p\,>\,B\\p \text{ is non-}\\\text{anomalous}}}\frac{\tau(\ord(p))}{\l(p)}\left(\sum_{\substack{P(m)\,\leq\, p\\\ord(m)\,<\,\infty}}\frac{1}{m}\right)\leq \sum_{ \substack{p\,>\,B\\p \text{ is non-}\\\text{anomalous}}}\frac{\tau(\ord(p))\cdot (\log{p})^{1-\gamma}}{\l(p)}.  
\end{equation*}
Moreover, we know that $\tau(n)\ll n^{\epsilon}$ for any $\epsilon>0$ from \cite[Chapter I.5, Corollary 1.1]{gt} and $\l(p)=p\, \ord(p) $ (See Lemma~\ref{lm3}-(\ref{lm3.5})) since $p$ is non-anomalous. As a consequence,
\begin{align*} \sum_{ \substack{p\,>\,B\\p \text{ is non-}\\\text{anomalous}}}\frac{\tau(\ord(p))\cdot (\log{p})^{1-\gamma}}{\l(p)} \ll\sum_{p}\frac{(\log{p})^{1-\gamma}}{p\cdot \ord(p)^{1-\epsilon}}.\end{align*}
Our main task is to show convergence of the last sum above. We choose $\epsilon={\gamma}/{100}$ and $\beta<1-{\gamma}$. Then, we apply Lemma \ref{lm4} and Lemma \ref{lm5} with these constants to get that  \[\sum_{p}\frac{(\log{p})^{1-\gamma}}{p\cdot \ord(p)^{1-\epsilon}}\] converges.
\vspace{0.5cm}

\noindent Now, we will be dealing with the second sum in (\ref{eq3}) where $P(n)$ is anomalous. Again, let $p=P(n).$  We can assume that $n$ is square-free since $\mu(n)$ is non-zero only at square-free values. It follows that $n=p\cdot b$ where $b$ is square-free and $\gcd(b,p)=1$. Therefore,
$$\lcm(p,b,\ord(b))=\lcm(n,\ord(n))=\l(n)$$
and as $$\ord(b)=\lcm(\ord(p_{1}),\ord(p_{2}),\ldots \ord(p_{r}))\text{ where }b=p_{1}\cdot p_{2}\cdots p_{r},$$ we can conclude that $\gcd(p,\ord(b))=1$ implying $\l(n)=p\cdot \l(b).$ So we write \begin{equation}\label{eq4}\sum_{\substack{n\,=\,1}}^{x}{|\mu(n)|}\,{\r(n)}=\sum_{\substack{ P(n)>B\\ P(n) \text{ is }\\\text{ anomalous}}}^{x}{|\mu(n)|}\,{\r(n)}+K,\end{equation} where constant $K$ arises due to the convergent sum in case when $P(n)$ is non-anomalous or $P(n)\le B$. Recall that due to (\ref{eq2}), 
\begin{align*}
\sum_{\substack{ P(n)>B\\ P(n) \text{ is }\\\text{ anomalous}}}^{x}{|\mu(n)|}\,{\r(n)}&=\sum_{\substack{ P
(n)>B \\\ord(n)\,<\,\infty \\P(n) \text{ is anomalous}}}^{x}\frac{|\mu(n)|}{\l(n)}\\&\leq \sum_{\substack{B<p<x\\p\text{ is anomalous}}}\frac{1}{p}\cdot \sum_{\substack{b\,\le\,x\\\ord(b)\,<\,\infty}}\frac{|\mu(b)|}{\l(b)}\\&\leq \frac{1}{2}\cdot \sum_{\substack{b\,\le\,x}}{|\mu(b)|}\,{\r(b)}.\end{align*} Consequently, one can see that applying previous steps repeatedly on the last sum, the largest prime factor gets reduced in each such step and thus, from (\ref{eq4}), we have that $$\sum_{\substack{n\,=\,1}}^{x}{|\mu(n)|}\,{\r(n)}\le 2K.$$ Thus, if we let $x\rightarrow\infty$, we are done.\end{proof}
Now we proceed to prove Theorem \ref{thm2}.
\begin{proof}[Proof of Theorem \ref{thm2}]
The proof is similar to \cite[Theorem 1.2]{sk} and \cite[Theorem 1.4]{sa}.
\noindent For all positive integers $n$ and $d$, we define \[\varrho(n,d)=\begin{cases} 1, & \text{if}\ d\mid a_{n}\\ 0, & \text{otherwise.}\\ \end{cases}\]
Observe that $$\varrho(n,de)=\varrho(n,d)\cdot \varrho(n,e)$$ for all relatively prime positive integers $d$ and $e$ and positive integers $n$.

One can infer that $n \in \mathscr{B}_{F,\,k}$ if and only if $\ord(k)<\infty$, $\l(k)\mid n$ and $\varrho(n,p)=0$ for all prime numbers $p$ such that $p\mid n$ but $p\nmid k$. Henceforth,
\begin{align}\label{eq5}\#\mathscr{B}_{F,\,k}(x)&=\sum_{\substack{n\,\leq\, x\\\l(k)\,\mid\,n}}\prod_{\substack{p\,\mid\,n\\p\,\nmid\, k}}\left(1-\varrho(n,p)\right)=\sum_{\substack{n\,\leq\, x\\\l(k)\,\mid\, n}}\prod_{\substack{d\,\mid\, n\\\gcd(d,\,k)=1}}\mu(d)\cdot \varrho(n,d)\\&=\sum_{\substack{d\,\leq\, x\\\gcd(d,\,k)=1}}\mu(d) \sum_{\substack{m\,\leq\, {x}/{d}\\\l(k)\,\mid\,dm}}\varrho(dm,d).
\end{align}

\noindent Now note that if $\varrho(dm,d)=1$ and $\l(k)\mid dm$, it follows that $\ord(d)<\infty$ and $$\lcm(\ord(d),\l(k))\mid dm.$$ If $\gcd(d,k)=1,$ then this is equivalent to the fact that
$$\frac{\lcm(d,\lcm(\ord(d),\l(k)))}{d}=\frac{\lcm(\l(d),\l(k))}{d}=\frac{\l(dk)}{d}$$ divides $m.$
Therefore,
$$\sum_{\substack{m\,\leq\, {x}/{d}\\\l(k)\,\mid\, dm}}\varrho(dm,d)=\sum_{\substack{m\,\leq \,{x}/{d}\\{\l(dk)}/{d}\,\mid\, m}}1=\left\lfloor\frac{x}{\l(dk)}\right\rfloor=\left\lfloor x\,I_{F}(dk)\right\rfloor$$
which combined with (\ref{eq5}), implies 
$$\#\mathscr{B}_{F,k}(x)=\sum_{\substack{d\,\leq\, x\\\
\gcd(d,\,k)=1}}\mu(d)\left\lfloor x\,I_{F}(dk)\right\rfloor.$$
Now expressing the floor function in terms of fractional parts we get
\begin{equation}\label{eq6}
\#\mathscr{B}_{F,\,k}(x)=x\sum_{\substack{d\,\leq\, x\\
\gcd(d,\,k)=1}}{\mu(d)}\,{I_{F}(dk)}-\sum_{\substack{d\,\leq\, x\\
\gcd(d,\,k)=1}}\mu(d)\{{x}\,{I_{F}(dk)}\}. 
\end{equation} 
By Lemma \ref{lm9} and the fact that $\l(dk)\ge \l(d)$ when $\ord(d)<\infty$, we deduce that $$\sum_{\substack{d\,\leq\, x\\
\gcd(d,\,k)=1}}{|\mu(d)|}\,{\r(dk)}\leq \sum_{\substack{d\,=\,1}}^{\infty}{|\mu(d)|}\,{\r(d)}<\infty.$$
Also, we can see that
\begin{align*}\sum_{\substack{d\,\leq\, x\\
\gcd(d,\,k)=1}}|\mu(d)|\,\{{x}\,{\r(dk)}\}&= \OO(x^{1/2})+\sum_{\substack{x^{1/2}<d\leq x}}|\mu(d)|\,\{{x}\,{\r(dk)}\}\\&\le \OO(x^{1/2})+x\sum_{\substack{d\geq x^{1/2}}}{|\mu(d)|}\,{\r(d)}=o(x)\end{align*} since by Lemma \ref{lm9}, the last series is the tail of a convergent series and hence tends to zero as $x\to \infty. $ Thus, from (\ref{eq6}) we have that \begin{equation}\label{eq7}\frac{\#\mathscr{B}_{F,k}(x)}{x}\rightarrow\sum_{\substack{d\,\leq\, x\\
\gcd(d,\,k)=1}}{\mu(d)}\,{\r(dk)}.\end{equation}
Hence the first part of Theorem \ref{thm2} is proven.

For the second part, by the application of principle of inclusion and exclusion, one sees that 
$$\#\mathscr{A}_{F,\,k}(x)=\sum_{d\,\mid\, k}\mu(d)\#\mathscr{B}_{F,\,dk}(x),$$ 
which on applying (\ref{eq7}) reduces to 
\begin{align*}
\mathbf{d}(\mathscr{A}_{F,\,k})&=\sum_{d\,\mid\, k}\mu(d)\mathbf{d}(\mathscr{B}_{F,\,dk})=\sum_{d\,\mid\, k}\mu(d)\sum_{\substack{\gcd(c,\,dk)=1}}{\mu(c)\,
}{\r(cdk)}\\
&=\sum_{d\,\mid\, k}\sum_{\substack{\gcd(c,\,dk)=1}}{\mu(cd)}\,{\r(cdk)}=\sum_{\substack{t\,=\,1}}^{\infty}{\mu(t)}\,{\r(tk)}.
\end{align*}
since every square-free integer $t$ can be written in a unique way as $t=c\cdot d$, where $c$ and $d$ are square-free integers such that $c\mid k$ and $\gcd(d,k)=1$. Furthermore, note that the rearrangement of sum could be made possible due to the absolute convergence of sum in the lemma \ref{lm9}.
\end{proof}

\section{On the density of $\mathscr{A}_{F,\,ax+b,\,1}$ for general polynomials $F$}

After thoroughly analyzing the sets $\mathscr{A}_{F,\,x,\,k}$, we now turn our attention to their generalizations. One possible generalization is to explore a broader class of polynomials as $G$.

As stated in \cite{cd}, the only conceivable generalization in this regard is for $G$ with all rational roots and no fixed divisors. However, it appears that moving beyond the linear case is possible only for a specific class of $F$, which we discuss at the end of the section (\ref{rm2}). The benefit of these specific $F$ is that, under these circumstances, the set of primes $p$ for which $\ord(p)<\infty$ is zero, which aids in our calculations. In this section, we look at the case where $G(x)=ax+b$ with $\gcd(a,b)=1$ and $k=1$.

In the case of linear recurrences, the authors of \cite{cd} were able to obtain a density result for all $k$ due to the fact that, appropriately scaling and translating integral linear recurrences, one can again obtain another integral linear recurrence, and thus the problem is reduced to obtaining estimates for $k=1$. Unfortunately, due to the lack of results for dynamical sequences, we can only consider the case of $k=1$. Now, we state our first lemma which would help us in proving Theorem \ref{thm3}.
\begin{lemma}\label{lm7}
Let $G(x)\in\mathbb{Z}[x]$ be a linear polynomial with co-prime coefficients, $z$ be a fixed positive integer and let $$\mathcal{C}_{z}=\{n : p\mid \gcd(a_n,G(n)) \text{ for some prime } p\le z\},$$ then the asymptotic density of $\mathcal{C}_{z}$ exists and
$$\mathbf{d}(\mathcal{C}_{z})\leq  1-\frac{\alpha}{(\log{z})^{1-\gamma}}$$ for some positive constants $\alpha$ and $\gamma<1$.
\end{lemma}
\begin{proof}
Note that $a_{n}$ and $G(n)$ are periodic modulo primes $p$ for which $\ord(p)<\infty.$ Therefore, it is easy to see that $\mathcal{C}_{z}$ is a union of finitely many arithmetic progressions and finite subsets of $\mathbb{N}$; concluding that the density $\mathbf{d}(\mathcal{C}_{z})$ exists.

\noindent Clearly,$$\mathcal{C}_{z}\subseteq\{n : p\mid G(n) \text{ for some prime } p\le z \text{ such that }\ord(p)<\infty\},$$

\vspace{0.1cm}
\noindent so using Eratosthenes' sieve and Lemma \ref{lm1}, we know that
$$\limsup_{x\,\to\,+\infty}\frac{\#\mathcal{C}_{z}(x)}{x}\le 1-\prod_{\substack{p\,\leq\, z\\\ord(p)\,<\,\infty}}\left(1-\frac{1}{p} \right)\le 1-\frac{\alpha}{(\log{z})^{1-\gamma}},$$ for all $z\ge 2$, where $\alpha$ is some positive constant.
\end{proof}

We now prove Theorem \ref{thm3}.
\begin{proof}[Proof of Theorem \ref{thm3}]
The proof follows along the ideas of \cite[Theorem 1.1]{cs} or \cite[Theorem 1.4]{cd}.
For the rest of argument, we assume that all primes $p$ are such that $\ord(p)<\infty.$
Put $\mathcal{C}=\mathbb{N}\setminus\mathscr{A}_{F,\,ax+b,\,1}.$ We need to prove that the asymptotic density of $\mathcal{C}$ exists and is less than $1.$ For each $z>b,$ we split $\mathcal{C}$ into two subsets; $\mathcal{C}_{z}$ and $\mathcal{C}_{z}^{+}=\mathcal{C}\setminus\mathcal{C}_{z}.$ By Lemma~\ref{lm7}, we know that $\mathcal{C}_{z}$ has an asymptotic density. We can see that $\mathbf{d}(\mathcal{C}_{z})$ is a nondecreasing bounded function of $z,$ therefore the limit $$\delta :=\lim_{z\to +\infty}\mathbf{d}(\mathcal{C}_{z})\label{d2}$$ exists and is finite. Thus, we prove that the asymptotic density of $\mathcal{C}$ exists and is equal to $\delta$. If $n\in \mathcal{C}_{z}^+(x),$ then there exists a prime $p>z$ such that $p\mid an+b$ and $p\mid a_n$. Clearly, $p$ is non-anomalous as if $p\mid \gcd(an+b,a_n)$ and $\ord(p)=p$ then $p\mid \gcd(n,an+b)\le b$. Hence, we can write $n=\l(p)\, m$ for some positive integer $m\ll x/\l(p)$ such that $a_{\,\l(p)\, m}\equiv 0\pmod{p}.$ From Lemma~\ref{lm3}-(\ref{lm3.5}), we get that the number of possible values of $m$ is at most $$\OO\left(\frac{x}{p\cdot \ord(p)}+1\right).$$ Therefore, we conclude that $$\#\mathcal{C}_{z}^+(x)\ll \sum_{z\,\le\, p\,\ll\, x}\left(\frac{x}{p\cdot\ord(p)}+1\right)\ll x\cdot\left(\sum_{p>z}\frac{1}{p\cdot \ord(p)}+\frac{1}{\log{x}}\right),$$ where we used Chebyshev's bound for number of primes less than $x.$ Using Lemma \ref{lm5}, we get that $$\frac{\#\mathcal{C}_{z}^+(x)}{x}\ll\frac{1}{\log{z}}+\frac{1}{\log{x}},$$ so that 
\begin{align*}
\limsup_{x \to +\infty} \left|\frac{\#\mathcal{C}(x)}{x} - \mathbf{d}(\mathcal{C}_{z})\right| &=\limsup_{x \to +\infty} \left|\frac{\#\mathcal{C}(x)}{x} - \frac{\#\mathcal{C}_{z}(x)}{x}\right| \\&= \limsup_{x \to +\infty} \frac{\#\mathcal{C}_{z}^+(x)}{x}\ll \frac1{\log{z}} ,\end{align*}

\vspace{0.3cm}
\noindent hence, by letting $z\to\infty,$ we find $\mathbf{d}(\mathcal{C})=\delta$. Now, to compute $\mathbf{d}(\mathcal{C}),$ we have 
\vspace{0.3cm}
\begin{align*}\mathbf{d}(\mathcal{C})=\limsup_{x \to +\infty} \frac{\#\mathcal{C}(x)}{x}&\le\limsup_{x \to +\infty} \frac{\#\mathcal{C}_{z}(x)}{x} + \limsup_{x \to +\infty} \frac{\#\mathcal{C}_{z}^+(x)}{x} \\&\leq 1 - \left(\frac{c_1}{(\log{z})^{1-\gamma}} -\frac{c_2}{\log{z}}\right) ,
\end{align*}

\vspace{0.3cm}
\noindent for all $z \geq 2$, where $c_1$ and $c_2$ are positive constants. Finally, picking a sufficiently large $z,$ depending on $c_1$ and $c_2,$ we get that $\mathbf{d}(\mathcal{C})<1.$ Thus,$$\mathbf{d}(\mathscr{A}_{F,\,ax+b,\,1})=1-\mathbf{d}(\mathcal{C})>0,$$ as desired.\end{proof}

\begin{remark}\label{rm2}
If we restrict $F$ to be one of the following polynomials: 
\begin{enumerate}[(1)]
   \item $F(x)=x^{2}-kx+k$ for some $k\in\mathbb{Z}$
   \item $F(x)=x^2+kx-1$ for some $k\in\mathbb{Z}\setminus\{0,2\}$
   \item $F(x)=x^2+k$ for some $k\in\mathbb{Z}\setminus\{-1\}$
   \item $F(x)=x^2-2xk+k$ for some $k\in\mathbb{Z}\setminus\{\pm 1\},$
\end{enumerate} as  considered in \cite[Theorem 1.2]{rj} and let $G$ be a polynomial with all integral roots and no fixed divisors, then one can prove using arguments in Theorem \ref{thm3} and replacing Lemma \ref{lm14} with results in \cite[Theorem 1.2]{rj} to conclude that $\mathscr{A}_{F,\,G,\,1}$ has a positive asymptotic density.
\end{remark}
\begin{remark}
See that $\mathscr{A}_{F,\,G,\,1}=\mathscr{B}_{F,\,G,\,1}$ and, thus, for $G(x)=ax+b,$ we have a proven version of Theorem \ref{thm1} for all polynomials $F$ that have $0$ as a wandering point and $k=1$. \end{remark}

In light of these results, which regard the sets $\mathscr{A}_{F,\,G,\,1}$ and $\mathscr{B}_{F,\,G,\,1}$, it is natural to ask about the distribution of these sets.

\noindent \textbf{Question.} Can we obtain explicit expressions for asymptotic density for the sets $\mathscr{A}_{F,\,k}$ and $\mathscr{B}_{F,\,k}$ for all polynomials $F$ that have $0$ as a wandering point?

\bibliographystyle{amsplain}

\end{document}